\newtheorem{thm}{Theorem}[section]
\newtheorem{lem}[thm]{Lemma}
\theoremstyle{definition}
\theoremstyle{remark}
\newtheorem*{Thm}{{\bf Theorem A}}
\numberwithin{equation}{section}
\newcommand{\real}{{\mathbb R}}
\newcommand{\norm}[1]{\left\Vert#1\right\Vert}
\newcommand{\8}{\infty}
\renewcommand{\a}{\mathfrak{a}}
\begin{document}

\title[vector-valued $q$-variational inequalities...]{vector-valued $q$-variational inequalities for averaging operators and Hilbert transform}

\thanks{{\it 2000 Mathematics Subject Classification:} Primary: 42B20, 42B25. Secondary: 46E30}
\thanks{{\it Key words:} variational inequalities, averaging operators, Hilbert transform, martingale cotype $q$, UMD}

\author{Guixiang Hong}
\address{School of Mathematics and Statistics, Wuhan University, Wuhan 430072 and Hubei Key Laboratory of Computational Science, Wuhan University, Wuhan 430072, China}
\email{guixiang.hong@whu.edu.cn}

\author{Wei Liu}
\address{School of Mathematics and Statistics, Wuhan University, Wuhan 430072, China}
\email{Wl.math@whu.edu.cn}

\author{Tao Ma}
\address{School of Mathematics and Statistics, Wuhan University, Wuhan 430072, China }
\email{tma.math@whu.edu.cn}

\date{}
\begin{abstract}
Recently, in \cite{GXHTM}, the authors established $L^p$-boundedness of vector-valued $q$-variational inequalities for averaging operators which take values in the Banach space satisfying martingale cotype $q$ property. In this paper, we prove that martingale cotype $q$ property is also necessary for the vector-valued $q$-variational inequalities, which is a question left open. Moreover, we characterize UMD property and martingale cotype $q$ property in terms of vector valued $q$-variational inequalities for Hilbert transform.
\end{abstract}
\maketitle

\bigskip
\section{Introduction}\label{ST1}

It is well known that in the setting of vector-valued harmonic analysis, many results are related to the properties of geometry of Banach spaces.
In particular, the so-called UMD, and convexity properties of Banach spaces have become the most fundamental tools in the study of vector-valued harmonic analysis. The boundedness of vector-valued singular integral operators is an important problem in this area of research. Burkholder \cite{Burk} first proved that whenever a Banach space $X$ has the UMD property, the Hilbert transform is bounded on $L^p(\real^d;X)$ for any $1<p<\8$. Later, Bourgain \cite{Bour1} established that the UMD property is indeed necessary for the boundedness of the Hilbert transform. Hyt\"{o}nen \cite{TPH} proved that two-sided Littlewood-Paley-Stein $g$-function estimate is equivalent to the UMD property of Banach space $X$. Moreover, martingale cotype $q$  plays an important role in vector-valued analysis. The martingale cotype $q$  was introduced by Pisier \cite{G.P} to characterise the properties of uniformly convex Banach spaces. In \cite{Xu}, Xu provided a characterization of the martingale cotype $q$ property via the vector-valued Littlewood-Paley theory associated to the Poisson kernel on the unit circle. Later on, Mart\'{\i}nez et al \cite{MTX} further characterized this property through general Littlewood-Paley-Stein theory. We refer \cite{TJVW} for an extensive study on vector-valued harmonic analysis.

On the other hand, variational inequalities have gained a lot of attention in past decades for many reasons, one of which is that we can measure the speed of convergence for the family of operators under consideration. In 1976, L\'{e}pingle \cite{DL} applied the regularity of Brownian motion to obtain the first scalar-valued variational inequality for martingales. Ten years later, motivated by Banach space geometry, Pisier and Xu \cite{G.P1} provided another proof of L\'epingle's result based on stopping time argument. Almost in the same time Bourgain \cite{Bour2} used L\'{e}pingle's results to establish variational inequalities for the ergodic averages of a dynamical system. Bourgain's work and Pisier-Xu's method have inspired a new research direction in ergodic theory and harmonic analysis. In \cite{JKRW,JKRW1,JKRW2}, Jones et al. proved variational inequalities for ergodic averages. Campbell et al. in \cite{JRKM} established the variational inequalities for Hilbert transform and averaging operators, and  truncated singular integrals with the homogeneous type kernel \cite{JRKM1}.  Recently, Ding et al. \cite{DHL} obtained the variational inequalities for truncated singular integrals with the rough type kernel. We recommend \cite{J.M.E.B, J.M.E.B1, JKRW3, CDHL, JRKM1, YCC, AJ, TMC, RAJ, RG, MA, RATCJ, MSB, MX, MBP, GXHTM1, MA1, K-Zk,TMI} and references therein for related work.

Among these works, some weighted norm and Banach lattice-valued variational inequality in harmonic analysis have also been established. It is worthy to mention that the first and third author \cite{GXHTM} obtained the Banach space-valued variational inequalities for ergodic averages or averaging operators whenever the underlying Banach spaces are of martingale cotype $q$, while Hyt\"{o}nen et al \cite{TMI} proved variational inequality for vector valued Walsh-Fourier series when the Banach spaces belong to a subclass of UMD spaces verifying tile-type $q$. In these two papers, finite cotype (or type) is also shown necessary for variational inequalities. However, regarding the sharp necessary conditions, they are still known to the best of the authors' knowledge.

To further motivate the purpose of the present paper, let us recall Pisier-Xu's result \cite{G.P1}. Let us start with some notations.
Let $X$ be a Banach space and $\a_t(x):(0,\8)\times\real^d\rightarrow X$ be a vector-valued function.  
For $q\in[1,\8)$, the seminorm $V_q$ is defined by
$$V_q(\a_t(x):t\in Z)=\sup_{\substack{0<t_0<\cdots<t_J\\ t_j\in Z}}\bigg(\sum_{j=0}^J\|\a_{t_{j+1}}(x)-\a_{t_j}(x)\|^q\bigg)^{1/q},$$
where $Z$ is a subset of $(0,\8)$ and the supremum runs over all finite increasing sequences in $Z$.
Let $(\Sigma_n)_{n\geq 0}$ be an increasing sequence of $\sigma$-algebras on a probability space $(\Omega,\Sigma,\mathbb{P})$. The associated conditional expectations are denoted by $(\mathbb{E}_n)_{n\geq0}$. We say that the Banach space $X$ is of martingale cotype $q$ with $2\leq q<\8$, if there exists an absolute constant $C_{q}$ such that for all $f\in L^q(\Omega;X)$
\begin{equation*}
\sum_{n\geq 0}\norm{\mathbb{E}_{n}f-\mathbb{E}_{n-1}f}_{L^q(\Omega;X)}^{q}\leq C_{q}\norm{f}_{L^q(\Omega;X)}^{q},
\end{equation*}%
with convention $\mathbb{E}_{-1}f=0$.

In \cite{G.P1}, Pisier and Xu proved the following vector-valued variational inequalities for martingales.
\begin{Thm}
Let $X$ be a Banach space.
\begin{enumerate}[\indent (i)]
  \item  If $X$ is of martingale cotype $q_0$ with $2\leq q_0<\infty$, then for every $1<p<\8$ and $q>q_0$, there exists a positive constant $C_{p,q,X}$
such that
\begin{equation}\label{variational inequality for martingale}
  \|V_q(\mathbb{E}_kf:k\in\mathbb{N})\|_{L^p}\le C_{p,q,X}\|f\|_{L^p} ,~\forall f\in L^p(\Omega;X);
\end{equation}
  \item Suppose for some $p\in(1,\8)$ and $q\in[2,\8)$ the inequality \eqref{variational inequality for martingale} holds for a Banach space $X$. Then $X$ is of martingale cotype $q$.
\end{enumerate}
\end{Thm}

Note that the above conclusion (ii) is quite easy in the martingale setting. Indeed, using standard extrapolation argument, we can assume that the inequality \eqref{variational inequality for martingale} holds for $p=q$. Then the conclusion follows trivially from the definitions. The situation changes dramatically in harmonic analysis.

Let $f: \real \rightarrow X$ be locally integrable. Let $B_t$ be an open ball centered at the origin and its radius equal to $t>0$. Then the central averaging operators are defined as follow
\begin{equation}\label{differential operator}
 A_tf(x)=\frac{1}{|B_t|}\int_{B_t}f(x-y)dy=\frac{1}{|B_t|}\int_{\real^d}f(y)\mathds{1}_{B_t}(x-y)dy,~x\in\real,
\end{equation}
where $|\cdot|$ denotes the Lebesgue measure.%

As mentioned before, the  authors in \cite[Theorem 2.1]{GXHTM} obtained the $L^p$-boundedness of vector-valued variational inequalities for averaging operators when the underlying Banach spaces satisfy martingale cotype property. But they could not obtain results similar to (ii) of Theorem A. In this paper, we do it.

\begin{thm}\label{Thm:first}
Let $X$ be a Banach space.
\begin{enumerate}[\indent (i)]
\item If $X$ is of martingale cotype $q_0$ with $2\leq q_0<\infty$. Then for any $1<p<\infty$ and $q_0<q<\infty$, there exists a constant $C_{p,q}$ such that
\begin{equation}\label{strong(p,p)inequalities}
\norm{V_q(A_tf:t>0)}_{L^p}\leq C_{p,q}\norm{f}_{L^p},\;\forall f\in L^{p}(\real;X);
\end{equation}
\item  Let $2\leq q<\8$, and for some  $p\in(1,\8)$, there exists a positive constant $C_{p,q}$ such that
\begin{equation}\label{first-inequality}
 \norm{V_q(A_tf:t>0)}_{L^p}\leq C_{p,q}\norm{f}_{L^p},~\forall f\in L^{p}(\real;X),
\end{equation}
then $X$ is of martingale cotype $q$.
\end{enumerate}
\end{thm}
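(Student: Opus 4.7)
Part (i) of Theorem~\ref{Thm:first} is the content of \cite[Theorem~2.1]{GXHTM}, so the plan focuses on part (ii). The overall strategy is threefold: first, extract from the variational inequality a dyadic $q$-square function estimate; second, recognize this as a vector-valued Littlewood--Paley-type inequality for a family of mean-zero Haar-like convolutions; third, transfer the result to dyadic martingales and thereby recover the martingale cotype $q$ property of $X$.

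For the first two steps, note that for any increasing sequence of times, $V_q$ dominates the corresponding $\ell^q$-sum of consecutive differences. Specializing to dyadic times $t_j=2^{-j}$, $j\in\mathbb Z$, the hypothesis \eqref{first-inequality} yields
\begin{equation*}
\left\|\left(\sum_{j\in\mathbb Z}\|A_{2^{-j-1}}f-A_{2^{-j}}f\|_X^q\right)^{1/q}\right\|_{L^p(\mathbb R)}\le C_{p,q}\,\|f\|_{L^p(\mathbb R;X)}.
\end{equation*}
Since $A_tf=f\ast\phi_t$ with $\phi_t=\frac{1}{2t}\mathds{1}_{[-t,t]}$, a direct computation shows that $\phi_{2^{-j-1}}-\phi_{2^{-j}}$ is a compactly supported, mean-zero, piecewise-constant \emph{Haar-like} kernel at scale $2^{-j}$; hence the displayed inequality is a vector-valued $g$-function estimate for a wavelet-type family. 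Given an $X$-valued finite dyadic martingale $f=\sum_n d_n$ on $[0,1)$, I would then construct a test function $F\in L^p(\mathbb R;X)$ adapted to the scales and supports of the $d_n$, so that the convolution differences $(A_{2^{-n-1}}-A_{2^{-n}})F$, evaluated at carefully chosen points, reproduce the martingale differences $d_n$ up to a harmless local averaging. Inserting $F$ into the $g$-function inequality and summing over dyadic cells should yield
\begin{equation*}
\sum_n\|d_n\|_{L^q([0,1);X)}^q\le C\,\|f\|_{L^q([0,1);X)}^q,
\end{equation*}
i.e.\ the martingale cotype $q$ property of $X$; the equivalence of the $L^p$-formulations of this property across $p\in(1,\infty)$ means the outer norm level is unimportant.

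The main obstacle is precisely the transference step above. Because the averaging intervals $(x-2^{-j},x+2^{-j})$ do \emph{not} form a dyadic filtration in the variable $x$ at a fixed scale, the differences $(A_{2^{-j-1}}-A_{2^{-j}})F$ coincide with dyadic martingale differences only up to error terms involving neighboring dyadic cells, and controlling these errors is the technical heart of the argument. To bypass this, I would fall back on one of two options: (i) averaging over translations of the dyadic grid (a random dyadic grid argument), so that on average the box-kernel differences align with martingale differences; or (ii) comparing the box-kernel $g$-function with the Poisson kernel $g$-function via standard kernel-comparison estimates, and then invoking the characterization of martingale cotype $q$ in terms of Poisson Littlewood--Paley--Stein inequalities due to Xu \cite{Xu} and Mart\'inez--Torrea--Xu \cite{MTX}. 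Either route reduces the problem to an already-established characterization of martingale cotype $q$, and this is the point at which I would expect the bulk of the technical work to lie.
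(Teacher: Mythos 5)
Your reduction of \eqref{first-inequality} to a dyadic $q$-square-function estimate is valid, and your fallback of comparing with the Poisson kernel is indeed the paper's first step: Lemma~\ref{q-variation for poisson kernel}, quoted from \cite{GXHTM}, upgrades \eqref{first-inequality} to the $q$-variational inequality for the Poisson integral $\mathbb{P}_t$ on the circle, for every $p\in(1,\infty)$. But the transference to martingales --- which you correctly identify as the heart of the matter --- is left unresolved, and neither of your two fallbacks closes it. The direct ``Haar-like'' route fails for the reason you state. The random-dyadic-grid device does not repair it: averaging over translated grids lets one dominate balls by dyadic cubes, but it does not turn the mean-zero box-kernel differences $(A_{2^{-j-1}}-A_{2^{-j}})F$ into conditional-expectation differences; the error coming from the overlap of $(x-2^{-j},x+2^{-j})$ with neighbouring cells is of the same order as the main term. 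And the characterization of martingale cotype $q$ in \cite{Xu,MTX} cannot be invoked as a black box, because its hypothesis is the \emph{continuous} Littlewood--Paley--Stein estimate for $\bigl(\int_0^\infty\|t\partial_tP_tf\|^q\,dt/t\bigr)^{1/q}$, and this quantity is \emph{not} dominated by $V_q(P_tf:t>0)$ or by any discrete square function (already for a scalar trajectory $u(t)=\sin(Nt)\eta(t)$ one has $\int\|tu'(t)\|^q\,dt/t\sim N^q$ while $V_q(u)\sim N^{1/q}$); the domination goes only in the opposite direction.

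What the paper does instead is re-run Xu's \emph{proof} of that necessity statement, adapted to the variational and discrete setting. A finite Walsh--Paley martingale on $\{-1,1\}^{\mathbb N}$, with differences $M_k-M_{k-1}=\phi_k(\varepsilon_1,\dots,\varepsilon_{k-1})\varepsilon_k$, is transferred (following Bourgain) to the circle by encoding each difference as a lacunary trigonometric polynomial $f_{k,(n)}(\theta)=a_{k,(n)}(\theta)\,b_{k,(n)}(\theta)$ with frequencies $n_k$. The key technical input is the selection Lemma~\ref{select lemma}: the frequencies $n_k$ and a decreasing sequence of Poisson parameters $l_k$ are chosen \emph{simultaneously and adaptively} so that $\mathbb{P}_{l_i}f-\mathbb{P}_{l_{i-1}}f=f_{i,(n)}+O(\epsilon)$; the $q$-variation of $\mathbb{P}_tf$ evaluated along $(l_i)$ then dominates $\sum_i\|f_{i,(n)}\|^q$, and a change of variables returns $\sum_k\mathbb{E}\|M_k-M_{k-1}\|^q\le C\,\mathbb{E}\|M_m\|^q$. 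Note that the adaptive choice of $(l_i)$ uses the full strength of the variational norm (the supremum over \emph{all} increasing sequences), which your very first step --- freezing the sequence at $t_j=2^{-j}$ --- throws away. The missing idea is precisely this Bourgain--Xu transference with its discrete selection lemma; without it, or an equivalent substitute, the proposal does not yield martingale cotype $q$.
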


Conclusion (i) has been obtained in \cite{GXHTM}. To show conclusion (ii), we use several times the transference techniques and follows essentially Xu's argument in his remarkable paper \cite{Xu} based on Bourgain's transference method. Many modifications are surely necessary in the present setting, see Section  \ref{ST2} for details.

Before stating our second result, we first introduce some notations. 
Recall that a Banach space $X$ is said to have the UMD property if for all scalars $r_n = \pm1, ~n=1,\cdots,N$, there exists a positive constant independent of $N$ such for all $1<p<\8$ and all $X$-valued $L^p$-martingale differences $(df_n)_{n\geq1}$,
\begin{equation*}
  \|\sum_{n=1}^Nr_ndf_n)\|_{L^p(\Omega;X)}\le C_{p,X}\|\sum_{n=1}^Ndf_n\|_{L^p(\Omega;X)}.
\end{equation*}
Let $\mathcal{S}(\real)$ be the set of Schwartz functions on $\real$. Define $f\in \mathcal{S}(\real)\bigotimes X\subseteq L^{p}(\real;X)$ by setting $f(x)=\sum^{n}_{k=1}\varphi_{k}(x)b_k$, where $\varphi_1,...,\varphi_n\in \mathcal{S}(\real)$ and $b_1,...,b_n\in X$. For $f\in \mathcal{S}(\real)\otimes X$,  the truncated Hilbert transform is defined by
$$H_{\epsilon} f(x)=\frac{1}{\pi}\int_{|y|>\epsilon}\frac{f(x-y)}{y}dy,$$


The second result of this paper is on vector-valued variational inequalities for Hilbert transform.

\begin{thm}\label{Thm:second}
Let $X$ be a Banach space.
\begin{enumerate}[\indent (i)]
\item Let $2\leq q_0<\infty$, and $X$ be of martingale cotype $q_0$ satisfying UMD property. Then for any $1<p<\infty$ and $q_0<q<\infty$, there exist a constant $C_{p,q}$ such that
\begin{equation}\label{Ineq:first}
\norm{V_q(H_\epsilon f:\epsilon>0)}_{L^p}\leq C_{p,q}\norm{f}_{L^p},\;\forall f\in L^{p}(\real;X);
\end{equation}
\item  Let $2\leq q<\8$. If for some $p\in(1,\8)$, there exists a positive constant $C_{p,q}$ such that
\begin{equation}\label{Ineq:second}
 \norm{V_q(H_\epsilon f:\epsilon>0)}_{L^p}\leq C_{p,q}\norm{f}_{L^p},~\forall f\in L^{p}(\real;X),
\end{equation}
then $X$ satisfies UMD property and is of martingale cotype $q$.
\end{enumerate}
\end{thm}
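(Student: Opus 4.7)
The plan is to prove (i) by a smooth--rough decomposition of the truncated Hilbert transforms and to prove (ii) by extracting UMD and martingale cotype $q$ as two independent consequences of \eqref{Ineq:second}.

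For (i), fix a smooth odd function $\phi$ on $\real$ with $\phi(y) = \frac{1}{\pi y}$ for $|y|\geq 1$, set $\phi_\epsilon(\cdot)=\epsilon^{-1}\phi(\cdot/\epsilon)$, and decompose $H_\epsilon f = \phi_\epsilon\ast f + R_\epsilon f$. The remainder kernel is supported in $[-\epsilon,\epsilon]$ and pointwise bounded by $C\epsilon^{-1}\mathds{1}_{[-\epsilon,\epsilon]}$, so the $q$-variation of $R_\epsilon f$ is pointwise controlled by that of the averaging family applied to $\|f\|$, to which Theorem \ref{Thm:first}(i) applies under the martingale cotype $q_0$ assumption. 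For the smooth family $\phi_\epsilon\ast f$ I split the $q$-variation into a short variation within each dyadic scale, dominated by the Littlewood--Paley--Stein $g_q$-function associated with $\phi$ and bounded on $L^p(\real;X)$ via the Mart\'{\i}nez--Torrea--Xu characterization of martingale cotype $q_0$, and a long variation along lacunary scales $\epsilon=2^n$, handled by the UMD property through an operator-valued Mihlin multiplier estimate for the lacunary frequency projections.

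For (ii), step one extracts UMD. Since $V_q(H_\epsilon f:\epsilon>0)$ pointwise dominates $\|H_{\epsilon_1}f - H_{\epsilon_2}f\|$ for any $0<\epsilon_2<\epsilon_1$, the assumption \eqref{Ineq:second} together with pointwise a.e. existence of $\lim_{\epsilon\to 0^+}H_\epsilon f$ on $\mathcal{S}(\real)\otimes X$ forces $\{H_\epsilon f\}_{\epsilon>0}$ to converge in $L^p(\real;X)$, and yields $L^p(\real;X)$-boundedness of the full Hilbert transform. Bourgain's converse \cite{Bour1} then delivers UMD of $X$. Step two extracts martingale cotype $q$ via Theorem \ref{Thm:first}(ii): it suffices to deduce the $V_q$-inequality \eqref{first-inequality} for the averaging operators $A_t$ from \eqref{Ineq:second}. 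I plan to do this by reversing the decomposition of part (i): the smooth piece $\phi_\epsilon\ast f$ is now controllable inside the variation by UMD-based lacunary square-function and Mihlin-type estimates (which are at our disposal by step one), so it can be absorbed, leaving $V_q(R_\epsilon f : \epsilon>0)$, whose pointwise behaviour is comparable to the $q$-variation of an averaging family and hence yields \eqref{first-inequality}. An alternative route is a direct Bourgain-style transference that pushes \eqref{Ineq:second} from $\real$ to a dyadic martingale model, transforming the $V_q$-inequality for $H_\epsilon$ into the $V_q$-inequality for conditional expectations, that is, into martingale cotype $q$ itself.

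The main obstacle is precisely the last reduction in (ii). The $q$-variation is a nonlinear seminorm over an entire family of operators, so the linear splitting $H_\epsilon = \phi_\epsilon\ast + R_\epsilon$ does not distribute over $V_q$; one must therefore insert the smooth piece inside the variation and control its contribution uniformly in $\epsilon$, which requires combining UMD-based square-function bounds with a transference argument that preserves the $q$-variational structure, in the spirit of Xu's approach underlying the proof of Theorem \ref{Thm:first}(ii). Once this step is carried out, Theorem \ref{Thm:first}(ii) immediately produces martingale cotype $q$, completing the proof of Theorem \ref{Thm:second}.
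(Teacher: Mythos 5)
Your extraction of UMD in part (ii) is essentially the paper's own argument (pointwise convergence of $H_{\epsilon}f$ plus the variational bound give $L^p$-boundedness of $H$, then Bourgain's converse), and your decomposition in part (i) is a plausible alternative to the paper's splitting $H_\epsilon=(H_\epsilon-Q_\epsilon)+Q_\epsilon$ into the conjugate Poisson kernel plus four one-sided averaging-type convolutions. One caveat for (i): a Mihlin multiplier theorem bounds individual operators, not the variation seminorm of the lacunary family; the long variation should rather be dominated by the $\ell^q$-sum $\bigl(\sum_n\|\phi_{2^{n+1}}\ast f-\phi_{2^n}\ast f\|^q\bigr)^{1/q}$ and handled by the martingale-cotype $g_q$-function estimates, exactly as you propose for the short variation, so UMD is neither needed nor sufficient where you invoke it.

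The genuine gap is the cotype extraction in part (ii). You claim that, after absorbing the smooth piece, $V_q(R_\epsilon f:\epsilon>0)$ is ``comparable to the $q$-variation of an averaging family'' and hence yields \eqref{first-inequality}. This fails: the kernel of $R_\epsilon=H_\epsilon-\phi_\epsilon\ast$ is odd, while the averaging kernel $\frac{1}{2\epsilon}\mathds{1}_{[-\epsilon,\epsilon]}$ is even, so $R_\epsilon$ kills the even part of $f(x-\cdot)$ at each point and no pointwise lower bound of $V_q(A_tf)$ in terms of $V_q(H_\epsilon f)$ plus controlled terms is available; the upper bound you use in part (i) does not reverse. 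The paper circumvents this entirely and never passes back to the averaging operators. It writes the conjugate Poisson kernel as an exact average of truncations,
\begin{equation*}
Q_\epsilon f(x)=\int_0^{\8}H_{\epsilon y}f(x)\,\frac{2y}{(1+y^2)^2}\,dy,
\end{equation*}
so that Minkowski's inequality gives the pointwise bound $V_q(Q_\epsilon f:\epsilon>0)\le C\,V_q(H_\epsilon f:\epsilon>0)$; then the identity $Q_\epsilon(Hf)=-P_\epsilon f$ together with the already-established UMD property (hence $\|Hf\|_{L^p}\le C\|f\|_{L^p}$) converts this into the variational inequality for the Poisson semigroup on $\real$, which transfers to the periodic Poisson kernel $\mathbb{P}_t$ by Lemma \ref{control by Poisson kernel} and then feeds into the Walsh--Paley transference of Section \ref{ST2} --- which only requires the $\mathbb{P}_t$-variational inequality \eqref{inequality for poisson kernel}, not \eqref{first-inequality} itself. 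Your ``alternative route'' of transferring \eqref{Ineq:second} directly to a dyadic martingale model is not fleshed out and would face the same difficulty, since the scale-selection argument of Lemma \ref{select lemma} is built on the semigroup structure of the multiplier $e^{-t|n|}$. You correctly flag this reduction as the main obstacle, but as written the proof of part (ii) is incomplete at precisely this point.
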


With Theorem \ref{Thm:first} at hand, the proof of Theorem \ref{Thm:second} is relatively easy but some of the argument seems completely new. We use again several times the transference techniques for both (i) and (ii), see Section \ref{ST3} for details.

Throughout this paper, by $C$ we always denote a positive constant that may vary from line to line.

\bigskip
\section{the proof of Theorem \ref{Thm:first}}\label{ST2}

In this section, we follow closely Xu's argument \cite{Xu} to prove Theorem~\ref{Thm:first}. The main ingredient is Bourgain's transferenece method. We first introduce some notations.

Let $\mathbb{P}_t(\theta)=\sum_ne^{-t|n|}e^{2\pi in\theta}$ be the Poisson kernel of the unit disc. For $f\in L^p([0,1];X)$, denote the Poisson integral

$$\mathbb{P}_tf(\xi)=\mathbb{P}_t\ast f(\xi)=\int_0^1f(e^{2\pi i\theta})\bigg(\sum_ne^{-t|n|}e^{2\pi in(\xi-\theta)}\bigg)d\theta.$$

Let $m$ be a positive integer and $1\le k\le m$. Let $a_k(e^{2\pi i\theta_1},\cdots, e^{2\pi i\theta_{k-1}})$ be a trigonometric polynomial defined on $[0,1]^{k-1}$ with value in Banach space $X$,
\begin{align*}
  a_{k}&(e^{2\pi i\theta_1},\cdots,e^{2\pi i\theta_{k-1}})\\
  &=\sum_{|m_1|\le N_1}\cdots \sum_{|m_{k-1}|\le N_{k-1}} x_{m_1,\cdots, m_{k-1}}e^{2\pi im_1\theta_1}\cdots e^{2\pi im_{k-1}\theta_{k-1}},
\end{align*}
with convention $a_k=x_0$, when $k=1$. Let $b_k$ be a trigonometric polynomial with complex coefficients such that $\hat{b}(0)=0$,
\begin{equation*}
b_{k}(e^{2\pi i\theta_{k}})=\sum_{1\le|j|\le M_k}y_je^{2\pi ij\theta_{k}}.
\end{equation*}%
Let $\{n_k\}_{1\le k\le m}$ be a sequence of positive integers. Set
\begin{align*}
 a_{k,(n)}(\theta)&=a_k(e^{2\pi i(\theta_1+n_1\theta)},\cdots,e^{2\pi i(\theta_{k-1}+n_{k-1}\theta)}),\\
 b_{k,(n)}(\theta)&=b_{k}(e^{2\pi i(\theta_k+n_k\theta)}),\\
f(\theta)&=\sum_{k=1}^{m}a_{k,(n)}(\theta) b_{k,(n)}(\theta)\\
&=\sum_{k=1}^{m}f_{k,(n)}(\theta).
\end{align*}
In the following we fix $\theta_1, \cdots,\theta_m$. All these functions are considered as trigonometric polynomials in $e^{i\theta}$. 
By the  definition of $a_{k,(n)}(\theta)$ and $b_{k,(n)}(\theta)$, we have
\begin{equation*}
  \begin{split}
   f_{k,(n)}(\theta)&=\sum_{|m_1|\le N_1,\cdots, |m_{k-1}|\le N_{k-1}}\sum_{1\le|j|\le M_k}\big(x_{m_1,\cdots,m_{k-1}}y_j\\
   &\times e^{2\pi i(m_1\theta_1+\cdots+m_{k-1}\theta_{k-1}+j\theta_{k})}e^{2\pi i(m_1n_1+\cdots+m_{k-1}n_{k-1}+jn_{k})\theta}\big);
  \end{split}
\end{equation*}
and
\begin{equation}\label{weak poisson integral}
  \begin{split}
     \mathbb{P}_{t}f_{k,(n)}(\theta)=\sum_{m_i}\sum_{j}\big(&x_{m_1,\cdots,m_{k-1}}y_je^{2\pi i(m_1\theta_1+\cdots+m_{k-1}\theta_{k-1}+j\theta_{k})}\times\\
     &e^{-|m_1n_1+\cdots+m_{k-1}n_{k-1}+jn_{k}|t}e^{2\pi i(m_1n_1+\cdots+m_{k-1}n_{k-1}+jn_{k})\theta}\big).
\end{split}
\end{equation}

The following lemma is a discrete version of Xu's \cite{Xu} Lemma 3.5.
\begin{lem}\label{select lemma}
Let $a_{k,(n)}$, $b_{k,(n)}$ and $f_{k,(n)}$ are defined as above. For any $\epsilon>0$, we can choose an increasing sequence $\{n_k\}_{1\leq k\leq m}$ of positive integers and a decreasing sequence $\{{l_k}\}_{0\le k\le {m}}\subset [0,+\8]$ such that for all $\theta\in [0,1]$

\begin{equation}\label{ineq-1}
\|\mathbb{P}_{t}f_{k,(n)}(\theta)\|<\frac{\epsilon}{2^k}, ~\forall~t\geq {l_{k-1}},
\end{equation}
and for any $0\le t\le l_k$
\begin{equation}\label{ineq-2}
\sum_{j=1}^{k}\|\mathbb{P}_{l_k}f_{j,(n)}(\theta)-\mathbb{P}_{t}f_{j,(n)}(\theta)\|<\epsilon.
\end{equation}
\end{lem}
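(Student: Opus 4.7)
The plan is to construct the sequences $\{n_k\}$ and $\{l_k\}$ by induction, exploiting two features: (a) because each $b_k$ has vanishing mean, the frequency set of $f_{k,(n)}$ (viewed as a trigonometric polynomial in $e^{2\pi i\theta}$, with $\theta_1,\dots,\theta_m$ fixed as parameters) lies in $\{m_1 n_1+\cdots+m_{k-1}n_{k-1}+j n_k:|m_i|\le N_i,\ 1\le|j|\le M_k\}$, whose elements have absolute value at least $n_k-\sum_{i<k}N_i n_i$; and (b) the Poisson integral $\mathbb{P}_t$ acts on the frequency $\omega$ by multiplying by $e^{-|\omega|t}$, so large frequencies yield fast decay in $t$, while for small $t$ the multiplier $e^{-|\omega|t}$ is close to $1$. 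These two regimes provide \eqref{ineq-1} and \eqref{ineq-2} respectively.

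I would set $l_0=+\infty$, so that \eqref{ineq-1} is vacuous at $k=1$, and pick any $n_1\ge 1$. Assume $n_1<\cdots<n_{k-1}$ and $+\infty=l_0>l_1>\cdots>l_{k-1}>0$ have been constructed so that \eqref{ineq-1} and \eqref{ineq-2} hold for all indices up to $k-1$. For the induction step, I first choose $n_k$. From \eqref{weak poisson integral} and the trivial bound
\[
\|\mathbb{P}_t f_{k,(n)}(\theta)\|\le S_k\, e^{-(n_k-\sum_{i<k}N_i n_i)\, t},\qquad S_k:=\sum_{|m_i|\le N_i}\sum_{1\le|j|\le M_k}\|x_{m_1,\dots,m_{k-1}}\|\,|y_j|,
\]
which is uniform in $\theta,\theta_1,\dots,\theta_m$, it suffices to pick $n_k$ large enough (and $>n_{k-1}$) so that $S_k\, e^{-(n_k-\sum_{i<k}N_i n_i)\, l_{k-1}}<\epsilon/2^k$. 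Since $l_{k-1}>0$, such an $n_k$ exists; this gives \eqref{ineq-1} for index $k$.

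Now $n_1,\dots,n_k$ are fixed, and $f_{1,(n)},\dots,f_{k,(n)}$ are fixed trigonometric polynomials in $\theta$. Writing $\mathbb{P}_{l_k}f_{j,(n)}(\theta)-\mathbb{P}_t f_{j,(n)}(\theta)=\sum_\omega \hat f_{j,(n)}(\omega)\bigl(e^{-|\omega|l_k}-e^{-|\omega|t}\bigr)e^{2\pi i\omega\theta}$ and using $|e^{-|\omega|l_k}-e^{-|\omega|t}|\le 1-e^{-|\omega|l_k}\le|\omega|l_k$ for $0\le t\le l_k$, one obtains
\[
\sum_{j=1}^{k}\|\mathbb{P}_{l_k}f_{j,(n)}(\theta)-\mathbb{P}_{t}f_{j,(n)}(\theta)\|\le l_k\sum_{j=1}^{k}\sum_\omega |\omega|\,\|\hat f_{j,(n)}(\omega)\|=:C(n_1,\dots,n_k)\, l_k,
\]
again uniformly in $\theta,\theta_1,\dots,\theta_m$. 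Choosing $0<l_k<l_{k-1}$ small enough that $C(n_1,\dots,n_k)\,l_k<\epsilon$ yields \eqref{ineq-2} at index $k$ and closes the induction.

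The only real difficulty is the bookkeeping order of the two choices: one must fix $l_{k-1}$ before $n_k$ (because the rate of decay demanded in \eqref{ineq-1} depends on $l_{k-1}$), and then fix $n_k$ before $l_k$ (because the admissible size of $l_k$ in \eqref{ineq-2} depends on the frequency support of the already-chosen $f_{j,(n)}$, $j\le k$). With the order $l_0,n_1,l_1,n_2,l_2,\dots$ this causality is respected, and the inductive construction goes through exactly as in Xu's continuous-parameter argument \cite{Xu}, modulo the minor modifications above for the discrete Poisson multipliers.
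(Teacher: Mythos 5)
Your proposal is correct and follows essentially the same route as the paper: an alternating inductive choice in the order $l_0,n_1,l_1,n_2,\dots$, using $\hat b_k(0)=0$ to push the frequency support of $f_{k,(n)}$ away from zero so that the Poisson multiplier $e^{-|\omega|t}$ gives \eqref{ineq-1} for large $n_k$, and the uniform convergence $\mathbb{P}_t f\to f$ for trigonometric polynomials to get \eqref{ineq-2} for small $l_k$. Your version merely makes the paper's two existence claims quantitative (the explicit bounds $S_k e^{-(n_k-\sum_{i<k}N_i n_i)l_{k-1}}$ and $C(n_1,\dots,n_k)\,l_k$), which is a harmless refinement.
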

\begin{proof}

First we set $l_0=+\8$ and $n_1=1$. Suppose we have already defined $+\8=l_0>l_1>\cdots>l_{k-1}$ and $1=n_1<\cdots<n_k$ with the required properties. Now, we define $l_k$ and $n_{k+1}$ as the following.

Since $f$ is a trigonometric polynomial, then $\mathbb{P}_t\ast f$ uniformly convergence to $f$ as $t\rightarrow 0^+$ on $\theta\in [0,1]$. Hence we can choose an $l_k$ sufficiently close to $0$ and for any $0<t<l_k$ (\ref{ineq-2}) holds. Next, we choose $n_{k+1}$ such that (\ref{ineq-1}) at the step $k+1$ holds.

According to the definition $f_{k,n}$,  we know that there is a finite integer $N$ such that $|m_1n_1+\cdots+m_kn_k|<N$ for all $|m_1|\le N_1,\cdots, |m_k|\le N_k$. On the other hand, since $\hat{b}_{k+1}(0)=0$, if $|n_{k+1}|$ is sufficiently big (precisely, $n_{k+1}\in \mathbb{Z}\setminus[-N,N]$), then $|m_1n_1+\cdots+m_kn_k+jn_{k+1}|>0$ for all $|m_1|\le N_1,\cdots, |m_k|\le N_k$ and $1\le |j|\le M$. Then by (\ref{weak poisson integral}), we can choose $n_{k+1}$ such that (\ref{ineq-1}) holds at step $k+1$.

Finally, let $l_m=0$, then we have set up the required sequences $(n_k)_{1\le k\le m}$ and $(l_k)_{0\le k\le{m}}$.
\end{proof}

The following lemma is cited from \cite{GXHTM}.

\begin{lem}\label{q-variation for poisson kernel}
Let $X$ be a Banach space.  If inequality \eqref{first-inequality} holds for some $p$, then for every $1<p<\8$ we have
\begin{equation}\label{inequality for poisson kernel}
  \|V_q(\mathbb{P}_tf:t>0)\|_{L^p}\le C_{p,q}\|f\|_{L^p},~\forall f\in L^p([0,1];X).
\end{equation}
\end{lem}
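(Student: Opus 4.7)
The plan is to deduce \eqref{inequality for poisson kernel} from the averaging-operator hypothesis \eqref{first-inequality} in two stages: a transference from $\real$ to the torus, which passes the $q$-variational bound from the Euclidean averages $A_t$ to the periodic averages $\widetilde A_r$ on $[0,1]$, followed by a subordination representing the circle Poisson integral $\mathbb P_t$ as a positive superposition of these periodic averages.

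For the transference I would follow Bourgain's classical argument. Given a trigonometric polynomial $f\in L^p([0,1];X)$, form its periodic extension $\tilde f$ to $\real$, fix a smooth cutoff $\eta_R$ with $\eta_R\equiv 1$ on $[-R,R]$ and $\mathrm{supp}\,\eta_R\subset[-2R,2R]$, and apply \eqref{first-inequality} to $F_R=\eta_R\tilde f$. For $x\in[-R/2,R/2]$ and for scales $t\ll R$, the Euclidean average $A_tF_R(x)$ coincides with the periodic average $\widetilde A_t f(x)$, so after dividing both sides by $R^{1/p}$ and sending $R\to\infty$ one obtains
\[
\|V_q(\widetilde A_rf:0<r<1/2)\|_{L^p([0,1];X)}\le C_{p,q}\|f\|_{L^p([0,1];X)}.
\]

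For the subordination, observe that the circle Poisson kernel $\mathbb P_t$, restricted to one period $[-1/2,1/2]$, is nonnegative, symmetric, and decreasing in $|\theta|$; writing it as a layer-cake integral $\mathbb P_t(\theta)=\int_0^{1/2}\mathbf 1_{[-r,r]}(\theta)\,d\nu_t(r)$ with $d\nu_t(r)=-\tfrac12\partial_r\mathbb P_t(r)\,dr$ yields the representation
\[
\mathbb P_tf(\xi)=\int_0^{1/2}\widetilde A_rf(\xi)\,d\mu_t(r),
\]
where $\{\mu_t\}_{t>0}$ is a family of probability measures on $(0,1/2]$ varying monotonically in $t$ (stochastic dominance). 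The $q$-variation of $t\mapsto\mathbb P_tf(\xi)$ is then pointwise controlled by $V_q(\widetilde A_rf(\xi):0<r<1/2)$ plus an auxiliary Hardy--Littlewood maximal-function term, which, combined with the preceding display, gives \eqref{inequality for poisson kernel} at the hypothesised exponent; the upgrade to every $p\in(1,\infty)$ is then standard, using the semigroup structure of $\{\mathbb P_t\}_t$ together with interpolation against endpoint estimates. The main obstacle is making this subordination rigorous at the level of $q$-variation: since $V_q$ is strongly nonlinear, one cannot push it inside the integral over $r$ naively, and the argument must exploit the monotone dilation-covariant structure of $\{\mu_t\}$ to realise the subordination as an essentially monotone reparametrization of the variable $r$, absorbing the remaining contributions into the maximal function.
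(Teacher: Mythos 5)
The paper does not prove this lemma at all: it is quoted verbatim from \cite{GXHTM}, where the chain of implications runs in the opposite order from yours. There one first subordinates the \emph{real-line} Poisson kernel $P_\epsilon$ to the Euclidean averages $A_t$ --- this is exactly the mechanism behind Lemma \ref{remark of poisson function1}: because $P_\epsilon(x)=\epsilon^{-1}P(\epsilon^{-1}x)$ is a dilation family, the change of variables $s=v\epsilon$ turns the subordinating measure into a single \emph{fixed} measure $v|P'(v)|\,dv$ independent of $\epsilon$, and Minkowski's inequality then yields the pointwise bound $V_q(P_\epsilon f(x):\epsilon>0)\le C\,V_q(A_sf(x):s>0)$ with no maximal-function term --- and only afterwards passes to the circle by periodizing the kernel, $\mathbb P_t(\theta)=\sum_{n\in\mathbb Z}P_{t/2\pi}(\theta+n)$, which is the content of Lemma \ref{control by Poisson kernel}.

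Your proposal reverses the two steps, and that is where the genuine gap lies. After transferring to the torus you must subordinate the \emph{circle} Poisson kernel to the periodic averages $\widetilde A_r$, but $\{\mathbb P_t\}_{t>0}$ is not a dilation family: the probability measures $\mu_t$ in your representation $\mathbb P_tf=\int_0^{1/2}\widetilde A_rf\,d\mu_t(r)$ genuinely change shape with $t$ and cannot be reduced to one fixed measure by rescaling $r$. Hence $V_q$ cannot be pushed inside the $r$-integral by Minkowski, and the proposed remedy --- realizing the subordination as ``an essentially monotone reparametrization of the variable $r$'' --- does not work, because $\mu_t$ is not a point mass; stochastic monotonicity of the family does not allow you to replace $\int\widetilde A_rf\,d\mu_t(r)$ by $\widetilde A_{r(t)}f$ up to maximal-function errors, and the differences $\mu_t-\mu_s$ are signed measures whose contributions must still be summed in $\ell^q$ over arbitrary partitions. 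This is precisely the step you flag as ``the main obstacle'' and then assert rather than prove, so the key pointwise inequality $V_q(\mathbb P_tf)\lesssim V_q(\widetilde A_rf)+Mf$ is unjustified as written. Two smaller issues: the conclusion is needed for \emph{every} $1<p<\8$ (in the proof of Theorem \ref{Thm:first}(ii) it is applied with $p=q$) from a hypothesis at a single $p$, and this extrapolation is a vector-valued Calder\'on--Zygmund argument rather than a semigroup/interpolation triviality; and the variation over large $t$, where $\mathbb P_tf\to\hat f(0)$, is not captured by averages with $r\le 1/2$ and needs a separate (easy) estimate. The cleanest repair is simply to adopt the order of \cite{GXHTM}: subordinate on $\real$ first, where the dilation structure makes the Minkowski argument work, and then periodize the Poisson kernel itself.
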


By the method of the proof of Theorem 3.1 in \cite{Xu}, we now prove Theorem~\ref{Thm:first}.
\begin{proof}[Proof of Theorem \ref{Thm:first}]
The first and third author proved the part (i) in \cite[Theorem 2.1]{GXHTM}. Hence, we only need to prove the part (ii).

Let $\Omega=\{-1,1\}^{\mathbb{N}}$ be the dyadic group equipped with its normalized Haar measure $P$. Let $m$ be a positive integer.  According to \cite{G.P}, we know that to prove the Banach space $X$ is of martingale cotype $q$, it suffices to prove that there exists a constant $C$ such that for all finite  Walsh-Paley martingale $M=(M_k)_{1\leq k\leq m}$  
\begin{equation}\label{dyadic martingale cotype}
\sum_{k=1}^m\mathbb{E}\|M_k-M_{k-1}\|^q\leq C_q\sup_{k}\mathbb{E}\|M_k\|^q,
\end{equation}
where denote $M_{-1}=0$.

Fix such a martingale $M=(M_k)_{1\le k\le m}$. Let $(\varepsilon_k)_{1\le k\le m}$ be the coordinate sequence of $\Omega$. By the representation of Walsh-Paley martingales (cf. \cite{TJVW}), we know that there exist functions $\phi_k:\{-1,1\}^{k-1}\rightarrow X$ such that
$$M_k-M_{k-1}=\phi_k(\varepsilon_1,\cdots,\varepsilon_{k-1})\varepsilon_k,~1\le k\leq m,$$
where $\varepsilon_k\in\{-1,1\}^k$ and $\phi_1$ is a constant belong to $X$.

Following \cite{Bour}, we use transference from $\Omega$ to the group $[0,1]^\mathbb{N}$. For $1\leq k\leq m$, set
\begin{equation*}
\begin{split}
&a_k(e^{2\pi i\theta_1},\cdots,e^{2\pi i\theta_{k-1}})=\phi_k(sgn(cos2\pi\theta_1),\cdots,sgn(cos2\pi\theta_{k-1})),\\
&b_k(e^{2\pi i\theta_k})=sgn(cos2\pi\theta_k).
\end{split}
\end{equation*}

Since $a_k(e^{2\pi i\theta_1},\cdots,e^{2\pi i\theta_{k-1}})\in L^q([0,1]^{k-1};X)$ and $b_k(e^{2\pi i\theta_k})\in L^1([0,1])$, 
by Ces\`{a}ro summability, we know that for any $\epsilon>0$, there are two trigonometric polynomials $a^\prime_k$ and $b^\prime_k$ with values respectively in $X$ and $\mathbb{C}$ such that $\|a^\prime_k-a_k\|_q<\epsilon$ and $\|b^\prime_k-b_k\|_q<\epsilon$. Note that $\int_0^1{sgn(cos2\pi\theta)}d\theta=0$. Then $\hat{b^\prime_k}(0)=0$. By this consideration, we assume that $a_k$ and $b_k$ are trigonometric polynomials with values respectively in $X$ and $\mathbb{C}$ and $\hat{b_k}(0)=0$, such that $\|f_{k,(n)}-(M_k-M_{k-1})\|_q<\epsilon$.

\medskip

In the following, we keep all notations introduced in Lemma \ref{select lemma}. By Lemma~\ref{select lemma} we have a sequence $\{n_i\}_{1\le i\le m}$ and $\{l_i\}_{0\le i\le m}$ such that \eqref{ineq-1} and \eqref{ineq-2} hold.

Firstly, for $i=1$, we have
\begin{equation*}
  \|\sum_{k=1}^{m}(\mathbb{P}_{l_1}f_{k,(n)}-\mathbb{P}_{l_{0}}f_{k,(n)})-f_{1,(n)}\|\leq\|
  \mathbb{P}_{l_1}f_{1,(n)}-f_{1,(n)}\|+\|\sum_{k=2}^{m}\mathbb{P}_{l_1}f_{k,(n)}\|,
\end{equation*}
combining \eqref{ineq-2} with \eqref{ineq-1}, we obtain the following estimate
\begin{equation*}
  \begin{split}
  \|\mathbb{P}_{l_1}f_{1,(n)}-f_{1,(n)}\|<\epsilon,~\|\sum_{k=2}^{m}\mathbb{P}_{l_1}f_{k,(n)}\|\le\sum_{k=2}^{m}\frac{\epsilon}{2^k}<\epsilon.
   \end{split}
\end{equation*}
Hence for $i=1$, we get
\begin{equation*}
 \|\sum_{k=1}^{m}(\mathbb{P}_{l_1}f_{k,(n)}-\mathbb{P}_{l_{0}}f_{k,(n)})-f_{1,(n)}\|\leq \epsilon.
\end{equation*}
For $1<i\le m$, we first have the following inequality
\begin{align*}
  &\|\sum_{k=1}^{m}(\mathbb{P}_{l_i}f_{k,(n)}-\mathbb{P}_{l_{i-1}}f_{k,(n)})-f_{i,(n)}\|\\
  &\leq\|\sum_{k=1}^{i-1}(\mathbb{P}_{l_i}f_{k,(n)}-\mathbb{P}_{l_{i-1}}f_{k,(n)})\|+\|\sum_{k=i}^{m}
  (\mathbb{P}_{l_i}f_{k,(n)}-\mathbb{P}_{l_{i-1}}f_{k,(n)})-f_{i,(n)}\|\\
  &=I+II.
\end{align*}
Now, we estimate $I$ and $II$. By (\ref{ineq-2}), the first term $I$ is controlled by
$$
I<\sum_{k=1}^{i-1}\|\mathbb{P}_{l_i}f_{k,(n)}-\mathbb{P}_{l_{i-1}} f_{k,(n)}\|<\epsilon.$$
To handle part $II$, one can observe the following inequality
\begin{align*}
 II&\le \|\sum_{k=i+1}^{m}\mathbb{P}_{l_i}f_{k,(n)}\|+\|\mathbb{P}_{l_i}f_{i,(n)}-f_{i,(n)}\|+\|\sum_{k=i}^{m}\mathbb{P}_{l_{i-1}}f_{k,(n)}\|\\
 &=III+IV+V.
\end{align*}
For $III$ and $V$, by (\ref{ineq-1}) we obtain
$$III\le \sum_{k=i}^{m}\|\mathbb{P}_{l_i}f_{k,(n)}\|\leq \sum_{k=i+1}^{m}\frac{\epsilon}{2^k}<\epsilon, ~\textit{and}, ~ V\le \sum_{k=i}^{m}\|\mathbb{P}_{l_{i-1}}f_{k,(n)}\|\leq \sum_{k=i}^{m}\frac{\epsilon}{2^k}<\epsilon.$$
For $IV$, using (\ref{ineq-2}) again, we have
$$IV=\|\mathbb{P}_{l_i}f_{i,(n)}-f_{i,(n)}\|<\epsilon.$$
Therefore, we have $\|\sum_{k=1}^{m}(\mathbb{P}_{l_i}f_{k,(n)}-\mathbb{P}_{l_{i-1}}f_{k,(n)})-f_{i,(n)}\|\le 3\epsilon$. By this estimate we see that
\begin{align*}
&\int_{[0,1]^m}\int_{[0,1]}\|f_{i,(n)}(\theta)\|^qd\theta d\theta_1\cdots d\theta_m\\
&\le C_q\int_{[0,1]^m}\int_{[0,1]}\bigg\|\sum_{k=1}^{m}(\mathbb{P}_{l_i}f_{k,(n)}(\theta)-\mathbb{P}_{l_{i-1}}f_{k,(n)}(\theta))\bigg\|^q
 d\theta d\theta_1\cdots d\theta_m\\
&+C_q\int_{[0,1]^m}\int_{[0,1]}\bigg\|\sum_{k=1}^{m}(\mathbb{P}_{l_i}f_{k,(n)}(\theta)-\mathbb{P}_{l_{i-1}}f_{k,(n)}(\theta))-f_{i,(n)}(\theta)\bigg\|^q
 d\theta d\theta_1\cdots d\theta_m\\
 &\le C_{q}\int_{[0,1]^m}\int_{[0,1]}\bigg\|\sum_{k=1}^{m}(\mathbb{P}_{l_i}f_{k,(n)}(\theta)-\mathbb{P}_{l_{i-1}}f_{k,(n)}(\theta))\bigg\|^qd\theta d\theta_1\cdots d\theta_m+C_q\epsilon^q.
\end{align*}
Hence we can observe that
\begin{equation}\label{control ineq}
  \begin{split}
     &\sum_{i=1}^m\int_{[0,1]^m}\int_{[0,1]}\bigg\|f_{i,(n)}(\theta)\bigg\|^qd\theta d\theta_1\cdots d\theta_m\\
  &\le C_{q}\sum_{i=1}^m\int_{[0,1]^m}\int_{[0,1]}\bigg\|\sum_{k=1}^{m}(\mathbb{P}_{l_i}f_{k,(n)}(\theta)
  -\mathbb{P}_{l_{i-1}}f_{k,(n)}(\theta))\bigg\|^qd\theta d\theta_1\cdots d\theta_m\\
  &+C_qm\epsilon^q.
  \end{split}
\end{equation}
On the other hand, by our assumption that inequality \eqref{first-inequality} holds, and Lemma~\ref{q-variation for poisson kernel}, we have
\begin{equation}\label{leq-equality}
  \begin{split}
 \int_{[0,1]^m}\int_{[0,1]}\sum_{i}\bigg\|\sum_{k=1}^{m}(\mathbb{P}_{l_i}f_{k,(n)}(\theta)-\mathbb{P}_{l_{i-1}}f_{k,(n)}(\theta))\bigg\|^qd\theta d\theta_1\cdots d\theta_m\\
 \le C_{q}\int_{[0,1]^m}\int_{[0,1]}\bigg\|\sum_{k=1}^{m}f_{k,(n)}(\theta)\bigg\|^qd\theta d\theta_1\cdots d\theta_m.
\end{split}
\end{equation}
Combining \eqref{control ineq} with \eqref{leq-equality}, we have
\begin{equation*}
\begin{split}
 \sum_{i=1}^m&\int_{[0,1]^m}\int_{[0,1]}\bigg\|f_{i,(n)}(\theta)\bigg\|^qd\theta d\theta_1\cdots d\theta_m\\
 &\le C_q \int_{[0,1]^m}\int_{[0,1]}\bigg\|\sum_{k=1}^{m}f_{k,(n)}(\theta)\bigg\|^qd\theta d\theta_1\cdots d\theta_m+C_qm\epsilon^q.
\end{split}
\end{equation*}
By the Fubini theorem and the change of variables
\begin{equation*}
 (\theta_1,\cdots,\theta_m)\mapsto(\theta_1-n_1\theta,\cdots,\theta_m-n_m\theta),
\end{equation*}
we obtain
$$\sum_{k=1}^m\mathbb{E}\|M_k-M_{k-1}\|^q\le C_q\mathbb{E}\|M_m\|^q+C_qm\epsilon^q.$$
Letting $\epsilon\rightarrow 0$, we deduce the desired inequality (\ref{dyadic martingale cotype}). Thus, we proved that $X$ is of martingale cotype $q$. This completes the proof of the part (ii).
\end{proof}

\bigskip
\section{the proof of Theorem \ref{Thm:second}}\label{ST3}
In this section, we prove Theorem \ref{Thm:second}. Before proving the theorem, we need some notations and lemmas.

Let $P(x)=\frac{1}{\pi}\frac{1}{1+x^2}$ be the Poisson function in $\real$. For $\epsilon>0$, set $P_\epsilon(x)=\frac{1}{\epsilon}P(\epsilon^{-1}x)$, 
and define
$$P_\epsilon f(x)=P_\epsilon\ast f(x)=\int_{\real}P_\epsilon(x-y)f(y)dy.$$
Let $Q_\epsilon$ denote the conjugate Poisson kernel, that is $Q_\epsilon(x)=\frac{1}{\pi}\frac{x}{\epsilon^2+x^2}$. Define
$$Q_\epsilon f(x)=Q_\epsilon\ast f(x)=\int_{\real}Q_\epsilon(x-y)f(y)dy.$$
Then we have the following  equality
\begin{equation}\label{the equality of Poisson function}
  Q_\epsilon f(x)=P_\epsilon(Hf)(x),~Q_\epsilon(Hf)(x)=-P_\epsilon f(x).
\end{equation}

The following lemma can be obtained by the vector-valued variational inequalities for averaging operators (see \cite[Lemma 3.2]{GXHTM} or \cite[Corollary 2.6]{JKRW1}).
\begin{lem}\label{main lem}
Let $X$ be a Banach space. If $X$ is of martingale cotype $q_0$ with $q_0\ge 2$, then for every $1<p<\8$ and $q>q_0$,
\begin{equation}\label{Lp bounded of pisson function}
  \|V_q(P_\epsilon f:\epsilon>0)\|_{L^p}\le C_{p,q}\|f\|_{L^p},~\forall f\in L^{p}(\real;X).
\end{equation}
\end{lem}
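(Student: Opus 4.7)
The plan is to reduce Lemma \ref{main lem} to the vector-valued variational inequality for the averaging operators, i.e.\ to Theorem \ref{Thm:first}(i), via a pointwise domination. The key input is an explicit representation of the one-dimensional Poisson kernel as a \emph{positive} superposition of normalized indicators. A direct computation, starting from the identity $\frac{d}{du}\bigl(-\frac{1}{\pi(1+u^2)}\bigr)=\frac{2u}{\pi(1+u^2)^2}$, gives for every $x\in\real$
\begin{equation*}
P_1(x)=\int_0^\infty g(t)\,\frac{\mathds{1}_{|x|<t}}{2t}\,dt,\qquad g(t):=\frac{4t^2}{\pi(1+t^2)^2},
\end{equation*}
where $g\ge 0$ and $\int_0^\infty g(t)\,dt=1$. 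Rescaling then yields $P_\epsilon f(x)=\int_0^\infty g(t)\,A_{\epsilon t}f(x)\,dt$ for every $\epsilon>0$ and every locally integrable $X$-valued $f$.

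For any finite increasing sequence $0<\epsilon_0<\cdots<\epsilon_J$ and any $t>0$, the rescaled sequence $\epsilon_0t<\cdots<\epsilon_J t$ is again increasing in $(0,\infty)$, whence
\begin{equation*}
\bigg(\sum_{j=0}^{J-1}\|A_{\epsilon_{j+1}t}f(x)-A_{\epsilon_j t}f(x)\|^q\bigg)^{1/q}\le V_q(A_sf:s>0)(x).
\end{equation*}
Writing $P_{\epsilon_{j+1}}f(x)-P_{\epsilon_j}f(x)=\int_0^\infty g(t)\bigl[A_{\epsilon_{j+1}t}f(x)-A_{\epsilon_j t}f(x)\bigr]\,dt$ and applying Minkowski's integral inequality (in the $\ell^q$-norm in the variable $j$, against the probability measure $g(t)\,dt$), one obtains the pointwise domination
\begin{equation*}
V_q(P_\epsilon f:\epsilon>0)(x)\le V_q(A_sf:s>0)(x).
\end{equation*}
Taking $L^p$-norms and invoking Theorem \ref{Thm:first}(i), which gives $\|V_q(A_sf:s>0)\|_{L^p}\le C_{p,q}\|f\|_{L^p}$ whenever $X$ is of martingale cotype $q_0$ and $q>q_0$, concludes the proof.

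The only delicate point is identifying the density $g$ so that the representation of $P_\epsilon$ as a superposition of averaging operators has a \emph{nonnegative} weight with total mass one; this is precisely what allows Minkowski's inequality to be applied without losing control of signs, and it is where the one-dimensional structure of the Poisson kernel is used. Once this subordination-type formula is in hand the argument is purely formal, and the problem is transferred entirely onto the already known variational inequality for the averaging operators.
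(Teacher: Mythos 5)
Your proof is correct and is essentially the argument the paper relies on: the paper only cites this lemma (from \cite{GXHTM} and \cite{JKRW1}), and the cited proofs, like your own, dominate $V_q(P_\epsilon f)$ pointwise by $V_q(A_tf)$ by writing the Poisson kernel as a positive superposition of normalized indicators (the same mechanism as the paper's Lemma~\ref{remark of poisson function1}, which needs only $\int_0^\infty t|\Phi'(t)|\,dt<\infty$ rather than total mass one). Your computation of the density $g$ and the application of Minkowski's integral inequality are both correct.
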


We also need the following lemma.

\begin{lem}\label{remark of poisson function}
If the Banach space $X$ satisfies UMD property, and $1<p<\8$, then the following two inequalities are equivalent:

\begin{enumerate}
  \item \begin{equation*}
  \|V_q(Q_\epsilon f:\epsilon>0)\|_{L^p}\le C_{p,q}\|f\|_{L^p},
\end{equation*}
  \item \begin{equation*}
  \|V_q(P_\epsilon f:\epsilon>0)\|_{L^p}\le C_{p,q}\|f\|_{L^p}.
\end{equation*}
\end{enumerate}
\end{lem}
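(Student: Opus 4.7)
The plan is to exploit the two identities $Q_\epsilon f = P_\epsilon(Hf)$ and $Q_\epsilon(Hf) = -P_\epsilon f$ from \eqref{the equality of Poisson function}, combined with the fact that UMD of $X$ gives $L^p(\real;X)$-boundedness of the Hilbert transform $H$ by Burkholder's theorem. The $q$-variation seminorm $V_q$ is insensitive to an overall sign (since it is built from norms of differences), so the two identities essentially interchange $P_\epsilon$ and $Q_\epsilon$ modulo an application of $H$.

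For the implication $(2)\Rightarrow(1)$, I would simply replace $f$ by $Hf$ in inequality (2) and use $Q_\epsilon f=P_\epsilon(Hf)$ together with $\|Hf\|_{L^p}\le C_p\|f\|_{L^p}$ to deduce
\[
\|V_q(Q_\epsilon f:\epsilon>0)\|_{L^p}=\|V_q(P_\epsilon(Hf):\epsilon>0)\|_{L^p}\le C_{p,q}\|Hf\|_{L^p}\le C'_{p,q}\|f\|_{L^p}.
\]
Strictly speaking one runs this for $f\in\mathcal{S}(\real)\otimes X$ and then extends by density, so that $Hf$ is well defined as an $L^p$-function.

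For the reverse direction $(1)\Rightarrow(2)$, the same trick works: replace $f$ by $Hf$ in (1), so that $Q_\epsilon(Hf)=-P_\epsilon f$, and observe that $V_q(-P_\epsilon f:\epsilon>0)=V_q(P_\epsilon f:\epsilon>0)$, since each difference $Q_{\epsilon_{j+1}}(Hf)-Q_{\epsilon_j}(Hf)=-(P_{\epsilon_{j+1}}f-P_{\epsilon_j}f)$ has the same norm. Combined again with UMD-boundedness of $H$ this yields
\[
\|V_q(P_\epsilon f:\epsilon>0)\|_{L^p}=\|V_q(Q_\epsilon(Hf):\epsilon>0)\|_{L^p}\le C_{p,q}\|Hf\|_{L^p}\le C'_{p,q}\|f\|_{L^p}.
\]

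There is no real obstacle here; the only careful point is the justification that the identities in \eqref{the equality of Poisson function} continue to hold in the vector-valued setting for $f\in L^p(\real;X)$ (equivalently, that it suffices to check the inequalities on the dense class $\mathcal{S}(\real)\otimes X$), which follows from Burkholder's theorem giving $H:L^p(\real;X)\to L^p(\real;X)$ boundedly and a standard approximation argument.
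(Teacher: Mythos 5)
Your proof is correct and takes essentially the same approach the paper intends: the paper states Lemma~\ref{remark of poisson function} without an explicit proof, but the identical argument (substituting $Hf$ for $f$, invoking the identities \eqref{the equality of Poisson function}, and using UMD-boundedness of $H$) appears verbatim in the paper's proof of Lemma~\ref{1 main lemma}. Your observation that $V_q$ is unaffected by the sign in $Q_\epsilon(Hf)=-P_\epsilon f$, and the remark about working on the dense class $\mathcal{S}(\real)\otimes X$, are exactly the right points of care.
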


The following lemma extends the results of Lemma \ref{main lem} for more general functions, the proof just follows the scalar case as in \cite{JRKM}. We omit the details.

\begin{lem}\label{remark of poisson function1}
Let $X$ be a Banach space. Let $\Phi$ be a scalar-valued function defined on $\real$ and $f\in L^p(\real;X)$.  Define $\Phi_t(x)=\frac{1}{t}\Phi(\frac{x}{t})$, and set $\Phi_tf(x)=\Phi_t\ast f(x)=\int_{\real}\Phi_t(y)f(x-y)dy$. If the Banach space $X$ is of martingale cotype $q_0$ with $q_0\ge 2$, then for every $q>q_0$ and $1<p<\8$, we have the following:
\begin{enumerate}
  \item Let $\Phi$ be supported on $[0,1]$. Assume $\Phi$ is differentiable on $(0,1)$, and that $\int_0^1x|\Phi^\prime(x)|dx<\8$, then $V_q(\Phi_tf:t>0)$ is $L^p$ bounded.
  \item Let $\Phi$ be supported on $[1,\8)$. Assume $\Phi$ is differentiable on $(0,1)$, and that $\lim_{x\rightarrow\8}\Phi(x)=0$. Moreover, $\Phi(x)$ satisfies $\int_1^\8 x|\Phi^\prime(x)|dx<\8$, then $V_q(\Phi_tf:t>0)$ is $L^p$ bounded.
\end{enumerate}
\end{lem}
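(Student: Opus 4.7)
The plan is to adapt the scalar argument of Campbell et al. \cite{JRKM}, using Theorem~\ref{Thm:first}(i) as the vector-valued input. The key device is the fundamental theorem of calculus, which lets me write $\Phi_t f$ as a superposition of one-sided averages of $f$ over intervals. An integral Minkowski inequality for the $V_q$-seminorm will then control the $V_q$-norm of the superposition by those of the individual averages, while the reparameterization $u = st$ gives the trivial identity $V_q(A_{st} f : t > 0) = V_q(A_u f : u > 0)$ for any fixed $s > 0$.

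Concretely, after the substitution $u = y/t$ one has $\Phi_t f(x) = \int \Phi(u) f(x-tu)\,du$. In case (1), the weighted integrability $\int_0^1 s|\Phi'(s)|\,ds < \infty$ justifies the absolute continuity representation $\Phi(u) = \Phi(1^-) - \int_u^1 \Phi'(s)\,ds$ for $u \in (0,1)$. Inserting this and applying Fubini turns $\Phi_t f$ into
$$\Phi_t f(x) = \Phi(1^-)\,\widetilde{A}_t f(x) - \int_0^1 s\,\Phi'(s)\,\widetilde{A}_{st} f(x)\,ds,$$
where $\widetilde{A}_u f(x) = u^{-1}\int_0^u f(x-y)\,dy$. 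Applying $V_q$ in $t$, integral Minkowski, the reparameterization above, and finally the $L^p$ norm produces the bound $C\bigl(|\Phi(1^-)| + \int_0^1 s|\Phi'(s)|\,ds\bigr)\|f\|_{L^p}$. For case (2), using $\Phi(\infty) = 0$ and writing $\Phi(u) = -\int_u^\infty \Phi'(s)\,ds$ for $u \ge 1$, Fubini together with the identity $t^{-1}\int_t^{st} f(x-y)\,dy = s\,\widetilde{A}_{st} f(x) - \widetilde{A}_t f(x)$ yields
$$\Phi_t f(x) = -\int_1^\infty \Phi'(s)\,\bigl[s\,\widetilde{A}_{st} f(x) - \widetilde{A}_t f(x)\bigr]\,ds,$$
and the analogous argument, combined with $\int_1^\infty |\Phi'(s)|\,ds \le \int_1^\infty s|\Phi'(s)|\,ds$, closes case (2).

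The main obstacle is that Theorem~\ref{Thm:first}(i) is stated for the centered average of \eqref{differential operator}, whereas my decomposition produces one-sided averages $\widetilde{A}_u$. This is resolved by observing that the transference-based proof of Theorem~\ref{Thm:first}(i), through Lemma~\ref{q-variation for poisson kernel} and the comparison with the Poisson semigroup on the circle, carries over verbatim to one-sided averages, since only the positivity, normalization, and translation-invariance of the kernel enter the argument. Once this is in hand, the remainder reduces to bookkeeping with the fundamental theorem of calculus and Fubini, and no further vector-valued structure beyond Theorem~\ref{Thm:first}(i) is required; in particular the hypothesis that $X$ is of martingale cotype $q_0$ enters only through that theorem.
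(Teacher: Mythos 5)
Your argument is exactly the one the paper intends: the paper omits the proof and defers to the scalar case in \cite{JRKM}, and your write-up (fundamental theorem of calculus to decompose $\Phi_t f$ into one-sided averages $\widetilde{A}_{st}f$, integral Minkowski for $V_q$, and the scale invariance $V_q(\widetilde{A}_{st}f:t>0)=V_q(\widetilde{A}_uf:u>0)$) is precisely that argument, correctly executed. The one caveat is your justification of the one-sided averaging bound: Lemma~\ref{q-variation for poisson kernel} and the Poisson-kernel comparison belong to the \emph{necessity} direction of Theorem~\ref{Thm:first}, not to the proof of part (i); the $L^p$-bound for $V_q(\widetilde{A}_uf:u>0)$ should instead be drawn from \cite{GXHTM} (their Theorem 2.1 applied to the translation semigroup, or the martingale-comparison proof rerun for one-sided intervals), which does give it under martingale cotype $q_0$.
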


\begin{proof}[Proof of Theorem \ref{Thm:second} (i)]
For $f\in \mathcal{S}(\real)\bigotimes X$, by the triangle inequality, we have
$$V_q(H_\epsilon f:\epsilon>0)\leq V_q((H_\epsilon-Q_\epsilon)f:\epsilon>0)+V_q(Q_\epsilon f:\epsilon>0).$$
By our assumption that the Banach space $X$ is of martingale cotype $q_0$ and satisfies UMD property, by Theorem \ref{Thm:first} (i), Lemma \ref{main lem} and Lemma~\ref{remark of poisson function}, it will be enough to estimate $V_q([H_\epsilon-Q_\epsilon]f(x):\epsilon>0)$.

For $f\in \mathcal{S}(\real)\bigotimes X$, we observe
\begin{align*}
&H_{\epsilon}f(x)-Q_\epsilon f(x)\\
&=\frac{1}{\pi}\int_{|y|>\epsilon}f(x-y)(\frac{1}{y}-\frac{y}{y^2+\epsilon^2})dy-\frac{1}{\pi}\int_{|y|<\epsilon}f(x-y)\frac{y}{y^2+\epsilon^2}dy\\
&=\frac{1}{\pi}\int^{\infty}_{\epsilon}f(x-y)(\frac{1}{y}-\frac{y}{y^2+\epsilon^2})dy-\frac{1}{\pi}\int^{\infty}_{\epsilon}f(x+y)(\frac{1}{y}-\frac{y}
{y^2+\epsilon^2})dy\\
&\qquad -\frac{1}{\pi}\int^{\epsilon}_{0}f(x-y)\frac{y}{y^2+\epsilon^2}dy+\frac{1}{\pi}\int^{\epsilon}_{0}f(x+y)\frac{y}{y^2+\epsilon^2}dy\\
&= A^{+}_{\epsilon}f(x)-A^{-}_{\epsilon}f(x)-G^{+}_{\epsilon}f(x)+G^{-}_{\epsilon}f(x).
\end{align*}
Denote $\phi^+(y)=\frac{1}{\pi}\frac{1}{y(y^2+1)}\mathds{1}_{[1,\8)}(y)$, $\phi^-(y)=\frac{1}{\pi}\frac{1}{y(y^2+1)}\mathds{1}_{(-\8,-1]}(y)$,
 $A^{+}_{\epsilon}f(x)=f\ast \phi^+_{\epsilon}(x)$ and $A^{-}_{\epsilon}f(x)=f\ast \phi^-_{\epsilon}(x)$. Also let $\rho^+(y)=\frac{1}{\pi}\frac{y}{y^2+1}\mathds{1}_{[0,1]}(y)$, $\rho^-(y)=\frac{1}{\pi}\frac{y}{y^2+1}\mathds{1}_{[-1,0]}(y)$,
 $G^{+}_{\epsilon}f(x)=f\ast \rho^+_{\epsilon}(x)$ and $G^{-}_{\epsilon}f(x)=f\ast \rho^-_{\epsilon}(x)$. By Lemma~\ref{remark of poisson function1}
 $\|V_q(A^+_\epsilon f:\epsilon>0)\|_{L^p}\le C_{q}\|f\|_{L^p}$ and $\|V_q(A^-_\epsilon f:\epsilon>0)\|_{L^p}\le C_{q}\|f\|_{L^p}$. Similar estimates also hold for operators $V_q(G^{+}_{\epsilon}f:\epsilon>0)$ and $V_q(G^{-}_{\epsilon}f:\epsilon>0)$. This completes to the proof of the part (i) of Theorem~ \ref{Thm:second}.
\end{proof}

Now let us prove the part (ii) of Theorem~ \ref{Thm:second}. Before proving this part, we first introduce some lemmas.

Let $R>\epsilon>0$. Clearly, the function $y^{-1}\mathds{1}_{\{\epsilon<|y|<R\}}$ belong to $L^q(\real)$, $1<q\le\8$. Hence by H\"{o}lder's inequality,
\begin{equation*}
  H_{\epsilon,R}f(x)=\frac{1}{\pi}\int_{\epsilon<|y|<R}\frac{f(x-y)}{y}dy
\end{equation*}
is well defined for all $f\in L^p(\real;X)$, $p\ge 1$. The same is true for $H_\epsilon f$. 

In the following, we will prove the main lemma in this section.

\begin{lem}\label{1 main lemma}
Let $X$ be a Banach space. For $2\le q<\8$, if there exists some $1<p<\8$ and a positive constant $C_{p,q}$ such that
\begin{equation}\label{q-variation for hilbert transform}
\norm{V_q(H_\epsilon f:\epsilon>0)}_{L^p}\leq C_{p,q}\norm{f}_{L^p},\;\forall f\in L^{p}(\real;X),
\end{equation}
then $X$ has UMD property and we have the following inequality
\begin{equation}\label{variation of poisson}
  \norm{V_q(P_\epsilon f:\epsilon>0)}_{L^p}\leq C_{p,q}\norm{f}_{L^p},\;\forall f\in L^{p}(\real;X).
\end{equation}
\end{lem}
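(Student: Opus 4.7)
My plan splits into two stages matching the two conclusions.

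\emph{Stage 1 (UMD).} By the very definition of $V_q$, one has the pointwise two-point bound $\|H_{\epsilon_1}f(x)-H_{\epsilon_2}f(x)\|\le V_q(H_\epsilon f:\epsilon>0)(x)$. For $f$ in the dense class $\mathcal{S}(\real)\otimes X$, $H_\epsilon f(x)\to 0$ as $\epsilon\to\infty$ because the kernel $1/y$ is integrable away from the origin against a Schwartz function (in fact $\|H_\epsilon f\|_\infty\le C\epsilon^{-1}\|f\|_1$). Letting $\epsilon_2\to\infty$ gives the pointwise maximal domination $\sup_{\epsilon>0}\|H_\epsilon f(x)\|\le V_q(H_\epsilon f)(x)$. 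Together with the hypothesis \eqref{q-variation for hilbert transform}, this yields $L^p$-boundedness of the maximal truncated Hilbert transform on $L^p(\real;X)$. A standard density argument, using that $H_\epsilon f\to Hf$ pointwise on the Schwartz class, promotes this to $L^p$-boundedness of $H$ itself; Bourgain's characterization \cite{Bour1} then forces the UMD property on $X$.

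\emph{Stage 2 (Poisson variational inequality).} I exploit the identity $P_\epsilon f=-Q_\epsilon(Hf)$ from \eqref{the equality of Poisson function}. Since UMD is now in hand, $H$ is bounded on $L^p(\real;X)$, so with $g:=Hf$ the problem reduces to proving $\|V_q(Q_\epsilon g:\epsilon>0)\|_{L^p}\le C\|g\|_{L^p}$. Splitting $Q_\epsilon=H_\epsilon+(Q_\epsilon-H_\epsilon)$ and using the triangle inequality for the $V_q$-seminorm,
\[
V_q(Q_\epsilon g)\le V_q(H_\epsilon g)+V_q\bigl((Q_\epsilon-H_\epsilon)g\bigr),
\]
the first summand is immediately controlled by the hypothesis applied to $g$. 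For the second summand I would reuse the kernel decomposition carried out in the proof of Theorem \ref{Thm:second}(i), writing $(Q_\epsilon-H_\epsilon)g$ as a sum of four smooth dilation-invariant convolutions $A_\epsilon^\pm g$ and $G_\epsilon^\pm g$ whose kernels $\phi^\pm,\rho^\pm$ are of the type treated in Lemma \ref{remark of poisson function1}, and establishing the $V_q$-bound for each.

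\emph{Principal obstacle.} The delicate issue is that Lemma \ref{remark of poisson function1} is stated assuming martingale cotype $q_0$, which has not been established at this stage (indeed, cotype is precisely what the subsequent proof of Theorem \ref{Thm:second}(ii) is expected to extract from the Poisson variational inequality being established here). The way around is to show that for the specific smooth, compactly supported / fast-decay kernels arising in the $Q_\epsilon-H_\epsilon$ decomposition, the $V_q$-inequality on $L^p(\real;X)$ holds under UMD alone. One may argue this by a classical long/short variation decomposition, using the Littlewood--Paley $g$-function bound in UMD spaces \cite{TPH} (or equivalently a Pisier--Xu type jump inequality) to control the long variation, and a derivative estimate to handle the short variation. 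This technical adaptation is where the real work sits; once it is in place, the remainder of the argument --- the Hilbert--Poisson identity, the triangle inequality, and Bourgain's theorem --- is routine.
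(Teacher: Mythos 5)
Your Stage 1 is essentially the paper's argument (the paper bounds $\|H_{\epsilon,1/\epsilon}f(x)\|\le V_q(H_\epsilon f)(x)$ by the two-point estimate, passes to the limit $\epsilon\to 0$, and invokes Bourgain), so that part is fine. The problem is Stage 2. You reduce to $V_q(Q_\epsilon g)$ with $g=Hf$, split off $Q_\epsilon-H_\epsilon$, and then need the $V_q$-bounds for the convolution families $\phi^\pm_\epsilon, \rho^\pm_\epsilon$ from Lemma \ref{remark of poisson function1}. As you yourself note, that lemma assumes martingale cotype $q_0<q$, which is precisely the conclusion the whole of Theorem \ref{Thm:second}(ii) is trying to extract downstream; using it here is circular. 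Your proposed escape --- that for these particular kernels the $V_q$-inequality holds under UMD alone, via a long/short variation split and the UMD Littlewood--Paley theory of \cite{TPH} --- is not a technical detail but the crux, and it is almost certainly unworkable: controlling the long variation in $V_q$ requires an $\ell^q$-sum square-function estimate $\bigl\|\bigl(\sum_k\|\Delta_k f\|^q\bigr)^{1/q}\bigr\|_{L^p}\lesssim\|f\|_{L^p}$, which is a martingale cotype $q$ statement, not a consequence of UMD (the $g$-function bounds in \cite{TPH} are of randomized/$\gamma$ type). Indeed, the spirit of Theorem \ref{Thm:first}(ii) is exactly that $q$-variational bounds for such approximate identities \emph{force} cotype $q$; a UMD space such as $L^{q_1}$ with $q_1>q$ should defeat your claim. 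So as written the proof has a genuine gap.

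The paper avoids the difference $Q_\epsilon-H_\epsilon$ altogether by an integral representation that you are missing: writing
\begin{equation*}
Q_\epsilon(x)=\frac{1}{\pi}\int_0^{|x|}\frac{1}{x}\,\frac{d}{dy}\Bigl(\frac{y^2}{\epsilon^2+y^2}\Bigr)\,dy
=\frac{1}{\pi}\int_0^{\8}\frac{1}{x}\,\mathds{1}_{\{|x|>\epsilon y\}}\,\frac{2y}{(1+y^2)^2}\,dy,
\end{equation*}
so that $Q_\epsilon f=\int_0^{\8}H_{\epsilon y}f\,\tfrac{2y}{(1+y^2)^2}\,dy$, i.e.\ $Q_\epsilon$ is an average of truncated Hilbert transforms against the probability density $\tfrac{2y}{(1+y^2)^2}$. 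Minkowski's inequality for the $V_q$-seminorm, together with the dilation invariance $V_q(H_{y\epsilon}f:\epsilon>0)=V_q(H_\epsilon f:\epsilon>0)$, gives the pointwise bound $V_q(Q_\epsilon f)\le C\,V_q(H_\epsilon f)$ using nothing but the hypothesis \eqref{q-variation for hilbert transform}. Then $P_\epsilon f=-Q_\epsilon(Hf)$ and the UMD property (boundedness of $H$) established in Stage 1 yield \eqref{variation of poisson}. If you replace your kernel decomposition by this averaging identity, your argument closes without any cotype input.
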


\begin{proof}
Since variational inequalities imply the pointwise convergence for the family of operators under consideration.
Then by condition (\ref{q-variation for hilbert transform}), we have for every $f\in L^{p}(\real;X)$
\begin{equation*}
  \|H_{\epsilon,1/\epsilon}f(x)\|\rightarrow\|Hf(x)\|,~a.e.~x\in\real,~as~\epsilon\rightarrow 0.
\end{equation*}
Furthermore,

  \begin{align*}
    \norm{Hf(x)}_{L^p(\real;X)}&=\norm{\lim_{\epsilon\rightarrow 0}\|H_{\epsilon,1/\epsilon}f(x)\|_X}_{L^p(\real)}\\
    &\le\norm{V_q(H_\epsilon f(x):\epsilon>0)}_{L^p(\real;X)}\\
    &\le C_{p,q}\norm{f}_{L^p(\real;X)}.
  \end{align*}

Hence, by the $L^p$-boundedness of vector-valued Hilbert transform, we know that the Banach space $X$ is of UMD (cf.~\cite[Theorem 5.1.1]{TJVW}).%

Next we will prove inequality~\eqref{variation of poisson}. Since
\begin{align*}
  Q_\epsilon(x)&=\frac{1}{\pi}\frac{x}{\epsilon^2+x^2}=\frac{1}{\pi}\frac{1}{x}\cdot \frac{x^2}{\epsilon^2+x^2}\\
  &=\frac{1}{\pi}\int^{|x|}_0\frac{1}{x}\frac{d}{dy}(\frac{y^2}{\epsilon^2+y^2})dy\\
  &=\frac{1}{\pi}\int^{\8}_0\frac{1}{x}\mathds{1}_{\{|x|>\epsilon y\}}\frac{2y}{(1+y^2)^2}dy.\\
\end{align*}
Therefore,
$$Q_\epsilon f(x)=\int^{\8}_{0}H_{\epsilon y}f(x)\frac{2y}{(1+y^2)^2}dy.$$
Note that
\begin{align*}
 &V_q(Q_\epsilon f(x):\epsilon>0)\\
&=\sup_{0<\epsilon_0<\cdots<\epsilon_J}\bigg(\sum_{j=1}^J\norm{Q_{\epsilon_j}f(x)-Q_{\epsilon_{j-1}}f(x)}^{q}\bigg)^{\frac{1}{q}}\\
 &\leq\sup_{0<\epsilon_0<\cdots<\epsilon_J}\left(\sum_j\norm{\int^{\8}_{0}\bigg\{H_{y\epsilon_j}f(x)-H_{y\epsilon_{j-1}}f(x)\bigg\}\frac{2y}{(1+y^2)^2}dy}^q
 \right)^{\frac{1}{q}}\\
 &\leq\int^{\8}_{0}\sup_{0<\epsilon_0<\cdots<\epsilon_J}\bigg(\sum_{j}\norm{H_{y\epsilon_j}f(x)-H_{y\epsilon_{j-1}}f(x)}^{q}\bigg)^{\frac{1}{q}}\frac{2y}{(1+y^2)^2}dy\\
 &\leq\int^{\8}_{0}V_q(H_\epsilon f(x):\epsilon>0)\frac{2y}{(1+y^2)^2}dy\\
 &=C V_q(H_\epsilon f(x):\epsilon>0).
\end{align*}
By the pointwise estimate, we have
$$\norm{V_q(Q_\epsilon f:\epsilon>0)}_{L^p}\le C_p\norm{ V_q(H_\epsilon f:\epsilon>0)}_{L^p}\leq C_{p,q}\norm{f}_{L^p}.$$
Using the above inequality and equality~\eqref{the equality of Poisson function}, we obtain
$$\|V_q(P_\epsilon f:\epsilon>0)\|_{L^p}=\|V_q(Q_\epsilon(Hf):\epsilon>0)\|_{L^p}\le C_{p,q}\|Hf\|_{L^p}\le C_{p,q}\|f\|_{L^p},$$
where in the last inequality we have used that the Banach space $X$ satisfies the UMD property. Thus we obtain inequality~\eqref{variation of poisson}.
\end{proof}

Associating with Poisson kernel $P_\epsilon$, the first and third author \cite[Lemma 3.4]{GXHTM} proved the following lemma.
\begin{lem}\label{control by Poisson kernel}
Let $\mathbb{P}_\epsilon$ be the Poisson kernel of the unit disc, which was defined in section \ref{ST2}. Let $X$ be a Banach space. For $1<p<\8$, if inequality \eqref{variation of poisson} holds, then for every $f\in L^p([0,1];X)$ we have
\begin{equation}\label{the boundedness of poisson integral}
  \|V_q(\mathbb{P}_\epsilon f:\epsilon>0)\|_{L^p}\le C_{p,q}\|f\|_{L^p}.
\end{equation}
\end{lem}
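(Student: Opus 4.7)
The plan is transference from $\real$ to the torus $[0,1]$ via periodization. The starting point is Poisson summation: since $\widehat{P_\epsilon}(k)=e^{-2\pi\epsilon|k|}$, one has $\sum_{n\in\mathbb{Z}} P_\epsilon(\cdot-n)=\mathbb{P}_{2\pi\epsilon}(\cdot)$, so for $f\in L^p([0,1];X)$ with periodic extension $\tilde f(x)=f(\{x\})$,
\begin{equation*}
  P_\epsilon *\tilde f(\theta)=\mathbb{P}_{2\pi\epsilon}*f(\theta),\qquad \theta\in[0,1].
\end{equation*}
After the change of variable $\epsilon\mapsto\epsilon/(2\pi)$, proving \eqref{the boundedness of poisson integral} reduces to controlling $V_q(P_\epsilon\tilde f(\theta):\epsilon>0)$ for $\theta\in[0,1]$. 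The obstacle is that $\tilde f\notin L^p(\real;X)$, so \eqref{variation of poisson} cannot be applied to $\tilde f$ directly. I would circumvent this by truncation: by density reduce to $f$ a trigonometric polynomial (hence bounded), and for large $R>0$ set $g_R=\tilde f\cdot\mathds{1}_{[-R,R]}\in L^p(\real;X)$, noting $\|g_R\|_p^p=2R\|f\|_p^p$. Applying the hypothesis \eqref{variation of poisson} to $g_R$ then yields
\begin{equation*}
  \int_{\real} V_q(P_\epsilon g_R(\theta):\epsilon>0)^p\,d\theta\le C_{p,q}^p\cdot 2R\|f\|_p^p.
\end{equation*}

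The heart of the argument is controlling the discrepancy $e(\epsilon,\theta):=P_\epsilon\tilde f(\theta)-P_\epsilon g_R(\theta)=\int_{|y|>R}P_\epsilon(\theta-y)\tilde f(y)\,dy$ in $V_q$-norm. For $\theta\in[-R/4,R/4]$ and $\epsilon$ in a bounded range $(0,\epsilon_0)$, the map $\epsilon\mapsto e(\epsilon,\theta)$ is smooth; using $|\partial_\epsilon P_\epsilon(x)|\lesssim(\epsilon^2+x^2)^{-1}$ together with the lower bound $|\theta-y|>3R/4$ in the integral, one obtains $\|\partial_\epsilon e(\epsilon,\theta)\|\lesssim \|f\|_\infty/R$ uniformly in $\epsilon\in(0,\epsilon_0)$. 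Therefore $V_1(e(\cdot,\theta):\epsilon\in(0,\epsilon_0))\lesssim \epsilon_0\|f\|_\infty/R$, and since $V_q\le V_1$ for $q\ge 1$ (elementary from $|a|^q\le |a|\cdot(\max\text{jump})^{q-1}$), the $V_q$-error tends to $0$ as $R\to\8$.

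Assembly proceeds via Minkowski's inequality for $V_q$ and the periodicity of $\tilde f$:
\begin{equation*}
\frac{R}{2}\int_0^1 V_q(\mathbb{P}_{2\pi\epsilon}f(\theta):\epsilon\in(0,\epsilon_0))^p\,d\theta=\int_{-R/4}^{R/4}V_q(P_\epsilon\tilde f(\theta):\epsilon\in(0,\epsilon_0))^p\,d\theta\lesssim \int_{\real}V_q(P_\epsilon g_R)^p\,d\theta+o(R).
\end{equation*}
Dividing by $R$ and letting $R\to\8$ yields \eqref{the boundedness of poisson integral} on the bounded range $(0,2\pi\epsilon_0)$. The tail $\epsilon\ge 2\pi\epsilon_0$ is easier: since $\mathbb{P}_\epsilon f\to\hat f(0)$ exponentially as $\epsilon\to\8$, one has the pointwise estimate $V_q\le V_1\le \int_{2\pi\epsilon_0}^\8\|\partial_\epsilon\mathbb{P}_\epsilon f(\theta)\|\,d\epsilon$, whose $L^p$-norm is controlled by $C\|f\|_p$ via Minkowski and the rapid $L^1$-decay of $\partial_\epsilon\mathbb{P}_\epsilon$ for $\epsilon\geq 2\pi\epsilon_0$. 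The main technical hurdle is the discrepancy estimate: control in $V_q$-norm (rather than merely sup norm) is needed, and this is achieved by exploiting the smoothness of $\epsilon\mapsto P_\epsilon g(\theta)$ to reduce to a $V_1$-bound via $\partial_\epsilon$, then applying the elementary bound $V_q\le V_1$.
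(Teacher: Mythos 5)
Your argument is correct, and it is essentially the standard transference-by-periodization proof: the paper itself does not prove this lemma but cites it from \cite[Lemma 3.4]{GXHTM}, where the same circle of ideas (Poisson summation identifying $\mathbb{P}_{2\pi\epsilon}$ with the periodization of $P_\epsilon$, plus control of the error in $V_1\ge V_q$ via $\partial_\epsilon$ of the kernel) is used. The only steps you leave implicit are routine: the density reduction for the sublinear map $f\mapsto V_q(\mathbb{P}_\epsilon f)$ (justified by restricting to rational partitions, using continuity of $\epsilon\mapsto\mathbb{P}_\epsilon f(\theta)$, and monotone convergence) and the splitting of the variation at $\epsilon=2\pi\epsilon_0$ (insert the point $2\pi\epsilon_0$ into any straddling partition, at the cost of a factor $2^{1-1/q}$). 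Both are standard, so the proposal stands as a complete proof.
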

With the above lemmas, we are now in a position to prove the part (ii) of Theorem~\ref{Thm:second}.
\begin{proof}[Proof of Theorem \ref{Thm:second} (ii)]
Using Lemma \ref{1 main lemma} we obtained that the Banach space $X$ satisfies the UMD property. Moreover, combining inequalities \eqref{variation of poisson} with Lemma~\ref{control by Poisson kernel}, we can establish inequalities \eqref{the boundedness of poisson integral}. In section \ref{ST2}, we proved that inequalities \eqref{the boundedness of poisson integral} implying  the Banach space $X$ is of martingale cotype $q$. This completes the proof of the part (ii) of Theorem~\ref{Thm:second}.
\end{proof}

\bigskip

\end{document}